\newcommand{\R}{{\ensuremath{\mathds{R}}}}
\newtheorem{theorem}{Theorem}[section]
\newtheorem{lemma}[theorem]{Lemma}
\theoremstyle{definition}
\newtheorem{definition}[theorem]{Definition}
\numberwithin{equation}{section}
\newcommand\Tr{\operatorname{Tr}}
 \newcommand{\eps}{\varepsilon}
 \newcommand{\Prob}{\mathbf{P}}
 \newcommand{\calL}{\mathcal{L}}
 \newcommand{\F}{\mathcal{F}}
 \newcommand{\U}{\mathcal{U}}
 \newcommand{\M}{\mathcal{M}}
\newcommand{\Fil}{\mathds{F}}
\newcommand{\Exp}{\mathds{E}}
\def\calX{{\mathcal X}}
\def\rmX{{\mathrm X}}
\def\calY{{\mathcal Y}}
\def\calZ{{\mathcal Z}}
\def\calW{{\mathcal W}}
\def\B{{\mathcal B}}
\def\calM{{\mathcal M}}
\def\calC{{\mathcal C}}
\newcommand{\abs}[1]{\left\vert#1\right\vert}
\newcommand{\set}[1]{\left\{#1\right\}}
\newcommand{\seq}[1]{\left<#1\right>}
\title{On the LP formulation in measure spaces of optimal control problems for jump-diffusions}
\author{\textbf{Rafael Serrano} \thanks{E-mail address: rafael.serrano@urosario.edu.co}}
\affil{\textsc{Universidad del Rosario}\\Calle 12c No. 4-69\\
Bogot\'a, Colombia}
\date{\today}
\begin{document}
\maketitle











\begin{abstract}
In this short note we formulate a infinite-horizon stochastic optimal control problem for jump-diffusions of Ito-Levy type as a LP problem in a measure space, and prove that the optimal value functions of both problems coincide. The main tools are the dual formulation of the LP primal problem, which is strongly connected to the notion of sub-solution of the partial integro-differential equation of Hamilton-Jacobi-Bellman type associated with the optimal control problem, and the Krylov regularization method for viscosity solutions.

\end{abstract}




\section{Introduction}

This short note revisits the infinite-dimensional linear programming (LP) approach to stochastic optimal control problems. We reformulate the problem of minimizing a infinite-horizon cost functional for controlled jump-diffusions of Ito-Levy type over a set of admissible controls as a linear program in a certain measure space. The linear objective function is the integral of the cost function against the occupation measure of the controlled process. The main challenge in this
LP approach is to prove equality of the optimal value functions of the
original control problem $V(x)$ and the associated infinite-dimensional linear program $\rho(x),$ and absence of duality gap between the primal
and dual programs (strong duality).

Using measure-valued (relaxed) controls, Stockbridge \cite{stock1990} proved the equality $\rho=V$ for ergodic optimal control of Markov processes and existence of optimal controls. Bhat and Borkar \cite{bhattborkar} and Kurtz and Stockbridge \cite{kurtzstock} extended these results to the case of feedback controls for time-inhomogeneous finite horizon and discounted infinite horizon problems. Cho and Stockbridge \cite{cho}, Taksar \cite{taksar} and Helmes and Stockbridge \cite{helmstock} obtained similar results for optimal stopping and singular control problems.

More recently, using  the dual formulation of the primal LP problem and viscosity solution theory, Buckdahn et al \cite{bgq2011} proved the equality $\rho=V$ in the case of optimal control diffusions with compact state space. Goreac and Serea \cite{gs2011} proved the same result for finite-horizon and optimal stopping problems. In this paper we show that this approach can be easily extended to the jump-diffusion case. We emphasize that our proof does not present any significant innovation as we follow closely the arguments in the proof of Theorem 6.4 in Jakobsen et al \cite{jklc2008}. However, to the best of our knowledge, this is the first paper that deals with the LP approach to stochastic optimal control problems for jump-diffusions.

Let us briefly describe the contents of this paper. In Section 2 we introduce the setting for the optimal control problem of jump-diffusions of It\^{o}-Levy type and formulate the primal LP problem associated with the optimal control problem and its dual. In Section 3 we recall the definition of viscosity solution for partial integro-differential equations and prove the main result using the Krylov regularization and results from Jakobsen et al \cite{jklc2008}.

\section{Optimal control problem and LP formulation}
Let $(\Omega,\F,\Prob)$ be probability space endowed with a filtration $\Fil=\set{\F_t}_{t\geq 0}$ satisfying the usual conditions, and let $\set{W_t}_{t\geq 0}$ be a standard $d-$dimensional Brownian motion with respect to $\mathds{F}.$

Let $E=\R^N\setminus \set{0}$ and let $\nu(dz)$ be a Levy measure on $\mathcal{B}(E),$ that is, a non-negative $\sigma$-measure satisfying
\[
\int_{E}(\abs{z}^2\wedge 1)\,\nu(dz)<+\infty.
\]
Let $N(dz,dt)$ be a homogeneous Poisson random measure with compensator intensity measure $\nu(dz)\,dt,$ and let $\tilde{N}(dz,dt)$ denote the compensated Poisson random measure $
\tilde{N}(dz,dt):=N(dz,dt)-\nu(dz)\,dt.$

Let $U$ be a compact metric space. For each $\Fil$-adapted $U$-valued control process $u=\set{u_t}_{t\geq 0}$ consider the
controlled Levy-It\^{o} equation
\begin{equation}\label{csIto-Levy}
\begin{split}
dX_t&=b(X_t,u_t)\,dt+\sigma(X_t,u_t)\,dW_t+\int_{E}\eta(X_{t-},u_{t-},z)\,\tilde{N}(dz,dt)\\
X_0&=x.
\end{split}
\end{equation}
The coefficients $b:\R^N\times U\to\R^N,$ $\sigma:\R^N\times U\to\R^{N\times d}$ and $\eta:\R^N\times U\times E\to\R^N$ satisfy conditions (\ref{A2}) and (\ref{A3}) below.
The class $\U(x)$ of admissible control policies is defined as the set of control processes $u=\set{u_t}_{t\geq 0}$ for which equation (\ref{csIto-Levy}) has an unique strong solution $X^{x,u}=\set{X_t^{x,u}}_{t\geq 0}.$

Let $c>0$ be a fixed discount rate and $h:\R^N\times U\to(-\infty,+\infty]$ denote the cost-to-go function. Let $\mathcal{J}$ be the infinite-horizon discounted cost functional
\begin{equation*}
  \mathcal{J}(x,u):=\Exp\left[\int_0^\infty e^{-ct}\,h(X_t^{x,u},u_t)\,dt\right].
\end{equation*}
We will use the following norms
\[
\abs{\phi}_0:=\sup_{x\in\R^N}\abs{\phi(x)}, \ \ [\phi]_1:=\abs{D\phi}_0 \ \ \mbox{ and } \ \ \abs{\phi}_1:=\abs{\phi}_1+[\phi]_1
\]
and assume the following conditions
\begin{enumerate}
  \item The Levy measure $\nu(dz)$ satisfies
\begin{equation}\label{A1}
\int_{\abs{z}\geq 1}e^{m\abs{z}}\nu(dz)<\infty \tag{A1}
\end{equation}
for some $m>0.$

\item There exists $K>0$ such that for all $u\in U$
\begin{equation}\label{A2}
\abs{b(\cdot,u)}_1+\abs{\sigma(\cdot,u)}_1+\abs{c(\cdot,u)}_1+\abs{h(\cdot,u)}_1\le K \tag{A2}
\end{equation}
and
\begin{equation}\label{A3}
\abs{\eta(\cdot,u,z)}_1\le K\left[\abs{z}\mathbf{1}_{\set{0<\abs{z}<1}}(z)+e^{m\abs{z}}\mathbf{1}_{\set{\abs{z}\geq 1}}(z)\right] \tag{A3}
\end{equation}
\end{enumerate}
Condition (\ref{A1}) is equivalent to the Levy process with Levy measure $\nu(dz)$ having finite moments of all orders, see e.g. Applebaum \cite[Section 2.5]{applebaum}.
It is satisfied, for instance, by one-dimensional tempered $\alpha$-stable processes with Levy measure
\[
\nu(dz)=\frac{C_1e^{-\lambda_1z}}{z^{1+\alpha_1}}\mathbf{1}_{\R_+}(z)\,dz
+\frac{C_2e^{-\lambda_2\abs{z}}}{\abs{z}^{1+\alpha_2}}\mathbf{1}_{\R_-}(z)\,dz
\]
with $C_1,C_2\geq 0,$ $\lambda_1,\lambda_2>0$ and $\alpha_1,\alpha_2<2.$ Under conditions (\ref{A1})-(\ref{A3}), for each $u\in\U(x)$ there exists an unique strong solution to equation (\ref{csIto-Levy}) and satisfies the following estimate, see e.g. Applebaum \cite[Section 6.6]{applebaum}
\begin{equation}\label{estX}
  \Exp\left[\sup_{t\in [0,T]}\abs{X_t^{x,u}}^{p}\right]\le C(1+\abs{x}^p)
\end{equation}
for all $p\geq 2.$ The main object of study of this paper is the stochastic optimal control problem
\begin{equation}\label{cp}
  V(x):=\inf_{u\in\U(x)}\mathcal{J}(x,u), \ \ x\in\R^N
\end{equation}
and the following linear programming (LP) formulation: for each $x\in\R^N$ and $u\in\U(x),$ denote with $\gamma^{x,u}$ the expected discounted \emph{occupation measure} on $\mathcal{B}(\R^N\times U)$ defined as
\[
\gamma^{x,u}(Q):=\mathds{E}\left[\int_0^\infty e^{-ct}1_Q(X^{x,u}_t,u_t)\,dt\right], \ \ \ Q\in\mathcal{B}(\R^N\times U).
\]
Using approximation of $h$ by simple functions, it is easy to prove that the occupation measure $\gamma^{x,u}$ satisfies
\[
\mathcal{J}(x,u)=\int_{\R^N\times U} h(y,u)\,\gamma^{x,u}(dy,du).
\]
Let $\mathcal{C}^2_{\rm pol}(\R^N)$ denote the class of $\mathcal{C}^2$-functions $f:\R^N\to\R$ with polynomial growth. For each $u\in U$ fixed, let $A^u+J^u$ denote the partial integro-differential operator
\begin{align*}
A^u f(x)&:=\seq{b(x,u),D f(x)}+\frac{1}{2}\Tr[\sigma(x,u)\sigma(x,u)^*D^2 f(x)],\\
J^u f(x)&:=\int_E\left\{f(x+\eta(x,u,z))-f(x)-\mathbf{1}_{\set{\abs{z}<1}}\seq{\eta(x,u,z),Df(x)}\right\}\,\nu(dz).
\end{align*}
for $f\in\mathcal{C}^2_{\rm pol}(\R^N).$ Here $Df(x)$ and $D^2f(x)$ denote the vector and square matrix of first and second-order partial derivatives of $f$ respectively.

Notice that the integral term in the operator $J^u$ is well-defined due to the exponential decay of the Levy measure $\nu(dz)$ at infinity (see Assumption A.1) and the fact that the singularity at $z=0$ is integrable for any $f\in\mathcal{C}^2(\R^N),$ see e.g. Applebaum \cite[Section 3.3]{applebaum}. 


Using It\^{o}'s formula for Levy-It\^{o} processes, Kunita's inequality and estimate (\ref{estX}), for any $f\in\mathcal{C}^2_{\rm pol}(\R^N)$ and $T>0,$ we have
\[
\Exp[e^{-cT}f(X^{x,u}_T)]-f(x)=\Exp\left[\int_0^T e^{-ct}\left\{[(A+J)f](X^{x,u}_t,u_t)-cf(X^{x,u}_t)\right\}\,dt\right].
\]
Also from estimate (\ref{estX}) we have
\[
\lim_{T\to\infty}\Exp[e^{-cT}f(X^{x,u}_T)]=0.
\]
By the dominated convergence theorem, taking the limit as $T\to\infty$ it follows
\begin{equation*}
\Exp\left[\int_0^\infty e^{-ct}[cf-(A+J)f](X^{x,u}_t,u_t)\,dt\right]=f(x).
\end{equation*}
We have proved that the occupation measure $\gamma^{x,u}$ satisfies the linear constraint,
\begin{equation}\label{eqlin1}
\int_{\R^N\times U}[cf-(A^u+J^u)f](y)\,\gamma^{x,u}(dy,du)=f(x), \ \ \  \forall f\in\mathcal{C}^2_{\rm pol}(\R^N).
\end{equation}
This suggests to consider the following LP problem over the vector space $\calM_{\rm b}(\R^N\times U)$ of finite signed measures on $\B(\R^N\times U)$
\begin{equation}\label{lp0}
\begin{split}
   \rho(x):=&\inf\ \int_{\R^N\times U}h(y,u)\,\mu(dy,du)\\
        &\text{subject to } \ \mu\in \calM_{\rm b}(\R^N\times U), \ \mu\geq 0\\
        & \mbox{and} \  \int_{\R^N\times U}[cf-(A^u+J^u)f](y)\,\mu(dy,du)=f(x), \ \ \forall f\in \mathcal{C}^2_{\rm pol}(\R^N).
\end{split}
\end{equation}
Clearly, we have $\rho\le V.$ The main purpose of this  note is to prove that in fact equality $\rho=V$ holds.

In order to formulate a LP problem with the linear constraint (\ref{eqlin1}), and its dual, we recall briefly some facts and notation concerning
infinite-dimensional linear programming. Two topological real vector
spaces $\calX,\calY$ are said to form a \emph{dual pair} if
there exists a bilinear form $\langle\cdot,\cdot\rangle:\calX\times\calY\to\mathds{R}$ such
that the mappings $\calX\ni x\to\seq{x,y}\in\R$ for $y\in\calY$
separate points of $\calX$ and the mappings $\calY\ni
y\to\seq{x,y}\in\R$ for $x\in\calX$ separate points of $\calY.$



We endow $\calX$ with the weak topology $\sigma(\calX,\calY),$ i.e. the
coarsest topology for which the maps $\calX\ni x\to\seq{x,y}\in\R$
are continuous for all $y\in\calY.$ For any vector subspace $F\subset\calX,$ let $F^*$ denote the vector space of linear functionals on $F$ with respect to the inherited weak topology. Notice that $\calX^*=\calY$ according to this notation.

Let $\calX^+$ be a \emph{positive cone} in $\calX,$ that is, a  non-empty convex cone with vertex $0.$ This induces the vector order
\[
x_1\geq x_2 \ \mbox{iff} \ x_1-x_2\in\calX^+.
\]
The positive cone $\calY^+$ in $\calY$ is defined as the negative polar
\[
\calY^+:=\{y\in\calY:\seq{x,y}\geq 0 \text{ for all }
x\in\calX^+\}.
\]
Notice that $\calY^+$ is closed, and the identity
\[
\calX^+=\{x\in\calX:\seq{x,y}\geq 0 \text{ for all }
y\in\calY^+\}
\]
holds only if $\calX^+$ is closed.

Let $(\calZ,\calW)$ be another dual pair  and let
$\mathcal{L}:X\to Z$ be a continuous linear operator. We define the adjoint map $\mathcal{L}^*:\calW\to\calY$
via the relation
\[
\seq{x,\calL^*w}:=\seq{\calL x,w},  \ \ \ x\in\calX, \ w\in\calW.
\]
Let $b\in \calZ, c\in \calY$ be given, and consider the \emph{primal} linear problem
\begin{align*}
    (\pi) \ & \text{ minimize} \ \seq{x,c}\\
        & \text{ subject to } \calL x=b, \ x\geq 0.
\end{align*}
The (algebraic) \emph{dual} problem of (P) is given by
\begin{align*}
    (\pi^*) \ & \text{ maximize} \ \seq{b,w}\\
        & \text{ subject to } -\calL^*w+c\geq 0, \ w\in \calW.
\end{align*}
It can be proved that the inequality $\sup\mathrm{(\pi^*)}\le \inf\mathrm{(\pi)}$  holds, see e.g. Anderson and Nash \cite{andnash}. If this inequality, usually referred to as \emph{weak duality}, is strict, then we say that there is a \emph{duality gap} between the primal linear program and its dual.

We now follow the above definitions (see also \cite{hernetal,taksar}) to formulate the dual problem to (\ref{lp0}). We set $\calX=\calM_{\rm b}(\R^N\times U)$ and $\calY=\calC_{\rm b}(\R^N\times U),$ the vector space of bounded continuous functions on $\R^N\times U.$ These spaces form a dual pair with the bilinear form
\[
\seq{f,\mu}:=\int_{\R^N\times U} f(y,u)\,\mu(dy,du), \ \ \ f\in\mathcal{C}_{\rm b}(\R^N\times U), \ \ \mu\in\M_{\rm b}(\R^N\times U).
\]
Finally, let $A+J:D(A+J)=\mathcal{C}^2_{\rm pol}(\R^N)\to \B(\R^N\times U)$ denote the linear operator defined by
\[
[(A+J)f](x,u):=(A^uf+J^uf)(x), \ \ (x,u)\in\R^N\times U.
\]
We take $\calW:=D(A+J)=\calC^2_{\rm pol}(\R^N)$ and $\calZ:=D(A+J)^*.$ We define the operator $\calL:\calX\to\calZ$ via its adjoint operator $\calL^*:\calW\to\calY$ as follows
\[
\calL^*f:=cf-(A+J)f, \ \ \ f\in \calW.
\]
With this notation, the linear program (\ref{lp0}) associated with the control problem (\ref{cp}) now reads
\begin{align*}
   \rho(x)=&\inf\ \seq{h,\mu}\\
        &\text{subject to } \ \mu\in \calM_{\rm b}^+(\R^N\times U)\\
        & \phantom{subject to } \ \calL\mu=\delta_x.
\end{align*}
The dual linear program is given by
\begin{align*}
   \rho^*(x)&=\sup\ \seq{f,\delta_x}\\
        &\ \ \ \text{ subject to } \ f\in \calC^2_{\rm pol}(\R^N)\\
        &\phantom{ subject to } - \mathcal{L}^*f+h\geq 0 \ \
\end{align*}
and the weak duality $\rho^*(x)\le \rho(x)$ holds, for all $x\in\R^N.$ Notice that the restriction $-\mathcal{L}^*f+h\geq 0$ on $f\in\calC^2_{\rm pol}(\R^N)$ is equivalent to
\[
cf(y)-h(y,u)-(A+J)(y,u)\leq 0, \ \ \forall u\in U, \ \ \ y\in\R^N
\]
that is, $f$ is a (smooth) sub-solution with polynomial growth of the elliptic partial integro-differential equation (PIDE) of \emph{Hamilton-Jacobi-Bellman} type
\begin{equation}\label{hjb}
\sup_{u\in U}\set{cV(y)-h(y,u)-[(A+J)V](y)}=0, \ \ x\in\R^N \tag{HJB}.
\end{equation}
Recall that equation (\ref{hjb}) is the equation associated with the dynamic programming approach to existence of optimal feedback controls for problem (\ref{cp}). Using this equation, we can rewrite the dual problem as
\[
\rho^*(x)=\sup\left\{f(x):f\in D(A+J) \text{ is sub-solution of equation (\ref{hjb})}\right\}.
\]
The following is the main result of this paper
\begin{theorem}\label{main}
The value function of the control problem (\ref{cp}) satisfies $V(x)\leq\rho^*(x)$ for all $x\in\R^N.$
\end{theorem}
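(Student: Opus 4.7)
Since weak duality and the feasibility of the occupation measure $\gamma^{x,u}$ already yield $\rho^{*}(x)\le\rho(x)\le V(x)$, the whole content of the statement is the reverse inequality $V(x)\le\rho^{*}(x)$. I plan to prove it by exhibiting, for every $\varepsilon>0$, a smooth subsolution $V^{\varepsilon}\in\calC^{2}_{\rm pol}(\R^N)$ of the HJB equation (\ref{hjb}) with $V^{\varepsilon}(x)\to V(x)$ as $\varepsilon\downarrow 0$. Any such $V^{\varepsilon}$ is a feasible point of the dual program, so $V^{\varepsilon}(x)\le\rho^{*}(x)$, and passing to the limit then gives the desired inequality.

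The first step is to verify that $V$ itself is a viscosity solution of (\ref{hjb}) and has polynomial growth. This is the standard consequence of the dynamic programming principle combined with It\^{o}'s formula for Levy--It\^{o} processes and the moment estimate (\ref{estX}); the hypotheses (\ref{A1}) on $\nu$ and (\ref{A3}) on $\eta$ are precisely what is needed to make the non-local operator $J^{u}$ act consistently on test functions in $\calC^{2}_{\rm pol}(\R^N)$ and to justify the usual DPP manipulations.

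The main step is the regularisation. Following the Krylov ``shaking the coefficients'' technique in the integro-differential formulation developed in Jakobsen, Karlsen and La Chioma \cite{jklc2008}, one adjoins a small spatial translation of amplitude at most $\varepsilon$ to the control variable, so that viscosity subsolutions of the resulting ``shaken'' HJB equation remain viscosity subsolutions of the original equation (\ref{hjb}). Applied to $V$ (which is a subsolution of the shaken equation up to a harmless sup over shifts) and followed by convolution with a smooth compactly supported mollifier $\psi_{\varepsilon}$, this produces $V^{\varepsilon}:=V\ast\psi_{\varepsilon}$ which, thanks to the convexity of the Hamiltonian in the $(b,\sigma,h)$-slot and a Jensen-type argument for the non-local term $J^{u}$, is a \emph{pointwise classical} subsolution of (\ref{hjb}). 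Mollification preserves polynomial growth, so $V^{\varepsilon}\in\calC^{2}_{\rm pol}(\R^N)$; and the continuity of $V$ yields $V^{\varepsilon}(x)\to V(x)$ as $\varepsilon\downarrow 0$.

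The delicate point, and where essentially all the technical work sits, is showing that mollification commutes with the non-local operator $J^{u}$ in a way compatible with the subsolution property, uniformly in $x\in\R^N$. This is exactly where the exponential moment bound (\ref{A1}) on $\nu$ and the growth estimate (\ref{A3}) on $\eta$ enter decisively: they are the assumptions under which the PIDE framework of \cite{jklc2008} applies, and the calculation parallels, essentially verbatim, the argument in the proof of Theorem~6.4 there. Once this technical step is in place, the conclusion follows as described in the opening paragraph.
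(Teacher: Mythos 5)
There is a genuine gap, and it sits exactly at the point you describe as ``harmless.'' Your plan is to mollify $V$ itself, setting $V^{\varepsilon}:=V\ast\psi_{\varepsilon}$, and to justify the subsolution property of this convolution by saying that $V$ ``is a subsolution of the shaken equation up to a harmless sup over shifts.'' That assertion is false in the direction you need it: a subsolution of the shaken equation (\ref{HJBeps}) must satisfy $H(x+e,\cdot)\le 0$ for \emph{all} $|e|\le\varepsilon$, which is \emph{stronger} than being a subsolution of (\ref{PIDE}), so $V$ does not inherit it for free. Consequently the translates $V(\cdot-e)$ are not subsolutions of the original equation when the coefficients depend on $x$, and the convex combination $V\ast\psi_{\varepsilon}$ need not be a subsolution either --- convexity of the Hamiltonian only helps once each translate being averaged is itself a subsolution. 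This is precisely why Krylov's method introduces the auxiliary perturbed problem at all: one must first pass to the value function $V^{\varepsilon}$ of the \emph{shaken} control problem (with control set enlarged by $\overline{B(0,\varepsilon)}$), observe that $V^{\varepsilon}(\cdot-e)$ is a viscosity subsolution of (\ref{PIDE}) for every $|e|\le\varepsilon$, and only then mollify with a kernel supported in a ball of radius $\kappa\le\varepsilon$. The paper does this, realizing the convolution as a limit of finite convex combinations (Lemma \ref{convex}) and invoking stability of viscosity subsolutions, which also disposes of your worry about commuting the mollifier with $J^{u}$.

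The second casualty of mollifying the wrong function is the convergence step. For your $V\ast\psi_{\varepsilon}$ the convergence to $V$ is indeed just continuity, but for the correct object one needs $V^{\varepsilon}(x)\to V(x)$, which is \emph{not} a soft fact: it is the continuous dependence estimate for the PIDE (Theorem 6.2 of \cite{jklc2008}, quoted here as Lemma \ref{eps}), and it is the other half of the technical content of the argument. You have the right reference and the right overall strategy (produce smooth polynomial-growth subsolutions converging to $V$ at the point $x$, each feasible for the dual), but as written the construction of those subsolutions does not go through; you need to insert the perturbed value function $V^{\varepsilon}$ between $V$ and the mollification, and replace ``continuity of $V$'' by the continuous dependence result.
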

As an immediate consequence, we have $V=\rho$ (optimal values of the LP problem (\ref{lp0}) and the control problem (\ref{cp}) coincide) and
$\rho=\rho^*$ (absence of duality gap).

\section{Viscosity solutions and Krylov regularization}
Notice that equation (\ref{hjb}) can be rewritten in the form
\begin{equation}\label{PIDE}
  H(x,V(x),DV(x),D^2V(x),V(\cdot))=0, \ \ x\in\R^N \tag{P}
\end{equation}
where for any $(x,r,p,\mathrm{X})\in \R^N\times\R\times\R^N\times\mathrm{S}_N$ and sufficiently regular function $V,$ the nonlinear \emph{Hamiltonian} $H$ is defined by
\begin{align*}
&H(x,r,p,\mathrm{X},V(\cdot))\\ &\phantom{AA}=\sup_{u\in U}\Bigl\{cr-h(x,u)-\seq{b(x,u),p}-\frac{1}{2}\Tr[\sigma(x,u)\sigma(x,u)^*\rmX]-J^uV(x)\Bigr\}.
\end{align*}


Because of the degeneracy in the second order differential operator and the non-local integral operator $J^u$, elliptic equation (\ref{PIDE}) is expected to have only solutions in the viscosity sense. The notion of viscosity solution was introduced by P.L. Lions and M.G. Crandall in the 1980s for first-order Hamilton-Jacobi equations. This was later generalized to second-order equations and more recently to equations with integro-differential operators. The viscosity solution approach provides an appropriate framework to formulate HJB equations for functions that are only assumed to be locally bounded, see e.g. Alvarez \cite{alvarez96},  Bardi \cite{bardi}, Barles et al \cite{bbp97}, Barles and Imbert \cite{bi2008}, Crandall et al \cite{usersguide}, Fleming and Soner \cite{fs1}, and the references therein. 


\begin{definition}

\begin{enumerate}
\item[(i)] A locally bounded upper semi-continuous (resp. lower semi-continuous) function $V:\R^N\to\R$ is a \emph{viscosity sub-solution} (resp. \emph{super-solution}) of $\eqref{PIDE}$ iff for every $x\in\R^N$ and $\phi\in\mathcal{C}_{\rm pol}^2(\R^N)$ such that
\[
V-\phi \ \text{attains a global maximum (resp. minimum) at }x
\]
then
\begin{equation*}
H(x,V(x),D\phi(x),D^2\phi(x),\phi(\cdot))\le 0 \ \ (\text{resp. } \geq 0)
\end{equation*}
\item[(ii)] $V$ is a \emph{viscosity solution} of (P) if it is both sub- and super-solution.
\end{enumerate}
\end{definition}


We follow the regularization method of N.V. Krylov \cite{krylov2000}, originally introduced to estimate the convergence rate of finite-difference approximations of equation (\ref{PIDE}) in the diffusion case. It has been systematically improved by Barles and Jakobsen \cite{bj2002}, and Jakobsen et al \cite{jklc2008}, among others, to obtain monotone approximation schemes for elliptic and parabolic equations of Bellman-Isaacs type.

For $\varepsilon\in(0,1),$ consider the auxiliary perturbed equation
\begin{equation}\label{HJBeps}
\sup_{\abs{e}\leq\varepsilon}H(x+e,V^\varepsilon(x),DV^\varepsilon(x),D^2V^\varepsilon(x),V^\varepsilon(\cdot))=0, \ \ x\in\R^N. \tag{P$_\varepsilon$}
\end{equation}
Notice that this is the HJB equation associated with the control problem with control set $U\times \overline{B(0,\eps)},$ cost function $h(x,u,e):=h(x+e,u)$ and coefficients $b,\sigma$ and $\eta$ defined analogously. Let $V^\eps$ denote the optimal value function
\[
V^\eps(x)=\inf_{(u,e)\in\U^\eps(x)}\Exp\left[\int_0^\infty e^{-ct}h(X_t^{x,u,e}+e_t,u_t)\right], \
 \  \ x\in\R^N,
\]
where $\U^\eps(x)$ is the set of admissible control policies and, for each $(u,e)\in\U^\eps(x),$ $X^{x,u,e}$ denotes the corresponding controlled process. Notice that $\U(x)$ can be seen as subset of $\U^\eps(x)$ by identifying $u\in\U(x)$ with $(u,0)\in\U^\eps(x).$

\begin{lemma}\label{eps}
$V$ (resp. $V^\eps$) is the unique viscosity solution to equation (\ref{PIDE}) (resp. (\ref{HJBeps})) in $\calC_{\rm b}^{0,1}(\R^N)$ and,
for each $x\in\R^N,$ we have $V^\eps(x)\to V(x)$ as $\eps\to 0.$
\end{lemma}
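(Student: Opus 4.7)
The plan is to establish, in order, that (i) $V$ and $V^\eps$ belong to $\calC_{\rm b}^{0,1}(\R^N)$ and satisfy (\ref{PIDE}) and (\ref{HJBeps}) respectively in the viscosity sense; (ii) uniqueness within this regularity class; and (iii) the pointwise convergence $V^\eps(x)\to V(x)$. For (i), boundedness is immediate from $\abs{h(\cdot,u)}_0\le K$ and $c>0$, giving $\abs{V}_0,\abs{V^\eps}_0\le K/c$. Lipschitz regularity comes from the stability estimate for the controlled Levy--Ito SDE: Kunita's inequality together with (\ref{A2})--(\ref{A3}) and Gronwall's lemma yields $\Exp\bigl[\abs{X^{x,u}_t-X^{x',u}_t}^2\bigr]\le e^{Ct}\abs{x-x'}^2$ for any common admissible $u$, and this combined with $[h(\cdot,u)]_1\le K$ and integration against $e^{-ct}\,dt$ gives $[V]_1,[V^\eps]_1\le C$ (either by choosing $c$ large enough, or by truncating at large $t$ and using the uniform bound on the cost). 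The viscosity solution property then follows from the dynamic programming principle for jump-diffusions plus Ito's formula for Levy-Ito processes applied to smooth test functions $\phi\in\calC_{\rm pol}^2(\R^N)$, exactly as in the pure-diffusion case with $A^u$ replaced by $A^u+J^u$.

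For (ii), I would invoke a comparison principle for bounded Lipschitz sub- and super-solutions of the integro-differential HJB equation, of the type established in Barles--Imbert \cite{bi2008} and Jakobsen et al \cite{jklc2008}. The standard doubling-of-variables argument with penalty $\abs{x-y}^2/\alpha$ has to be adapted to the nonlocal term: one splits the jump integral at a threshold $\delta$ into a small-jump part, handled by the $C^2$ regularity of the test function together with the $\abs{z}^2$-integrability of $\nu$ near zero, and a large-jump part, controlled by (\ref{A3}) and the exponential integrability (\ref{A1}). The appropriate framework is the maximum-principle-type characterization of viscosity solutions for nonlocal equations, in which the nonlocal operator is evaluated using the test function near $x$ and the sub/super-solution itself away from $x$. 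This comparison principle is the main technical obstacle, since the test functions produced by the penalization are not a priori in $\calC_{\rm pol}^2(\R^N)$.

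For (iii), the inclusion $\U(x)\subset\U^\eps(x)$ via $u\mapsto(u,0)$ gives $V^\eps(x)\le V(x)$ at once. For the reverse direction, fix $\delta>0$ and choose $(u,e)\in\U^\eps(x)$ that is $\delta$-optimal for $V^\eps(x)$; then compare the perturbed trajectory $X^{x,u,e}$ with the unperturbed solution of (\ref{csIto-Levy}) driven by the same $u\in\U(x)$. Because $\abs{e_t}\le\eps$ and the coefficients $b,\sigma,\eta$ are uniformly Lipschitz in $x$, Kunita's inequality combined with Gronwall yields $\Exp\bigl[\abs{X^{x,u,e}_t-X^{x,u}_t}^2\bigr]\le C\eps^2(e^{Ct}-1)$, and the Lipschitz bound $[h(\cdot,u)]_1\le K$ then produces $\bigl|V(x)-V^\eps(x)\bigr|\le C\eps+\delta$ after integrating against $e^{-ct}\,dt$ and using the uniform boundedness of $V,V^\eps$ to dominate the tail. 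Letting $\delta\to 0$ and then $\eps\to 0$ closes the argument.
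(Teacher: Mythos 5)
Your proposal is correct in outline but takes a genuinely different, and much more self-contained, route than the paper, whose entire proof of this lemma consists of citations: the viscosity-solution property of $V$ and $V^\eps$ is referred to Pham, {\O}ksendal--Sulem and Bouchard--Touzi, and the convergence $V^\eps\to V$ is obtained from the \emph{continuous dependence} estimate of Theorem 6.2 in Jakobsen et al.\ \cite{jklc2008}, an analytic result that compares the two Hamiltonians directly at the PDE level and delivers a rate in $\eps$ without returning to the trajectories. Your part (iii) replaces that with a probabilistic trajectory comparison (Kunita's inequality plus Gronwall applied to $X^{x,u,e}$ versus $X^{x,u}$ for a common $u$), which is more elementary, stays entirely on the stochastic-representation side, and is perfectly adequate here since only pointwise convergence is claimed. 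Your parts (i)--(ii) likewise supply actual arguments (dynamic programming plus It\^o for the sub/super-solution inequalities; doubling of variables with a $\delta$-splitting of the nonlocal term for comparison) where the paper supplies only references.

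The one soft spot is the Lipschitz regularity needed for membership in $\calC_{\rm b}^{0,1}(\R^N)$. The Gronwall estimate $\Exp\bigl[\abs{X^{x,u}_t-X^{x',u}_t}^2\bigr]\le e^{Ct}\abs{x-x'}^2$ grows exponentially in $t$, so integrating against $e^{-ct}\,dt$ yields a Lipschitz bound for $V$ only when $c>C/2$; your fallback of truncating at a time $T$ and using the uniform bound on the tail, after optimizing over $T$, gives H\"older continuity of exponent $\min(1,2c/C)$ rather than Lipschitz continuity. As written, your argument therefore places $V,V^\eps$ in $\calC_{\rm b}^{0,1}$ only under a largeness condition on the discount rate $c$ (your first option); the paper makes no such assumption and buries the issue in its citations, so this is as much the paper's gap as yours, but a complete proof should either impose $c$ large or verify that the comparison principle invoked in (ii) holds in the relevant H\"older class. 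This does not affect (iii): a H\"older-in-$\eps$ bound still gives $V^\eps(x)\to V(x)$. A smaller point worth a sentence in a final write-up: the Kunita step requires $\int_{\abs{z}\ge 1}e^{2m\abs{z}}\,\nu(dz)<\infty$ to square-integrate the bound in (\ref{A3}), which is slightly stronger than (\ref{A1}) as stated.
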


\begin{proof}
Viscosity solution property of the value functions $V$ and $V^\eps$ is a standard well-known result that follows from the dynamic programming principle, see e.g. Pham \cite[Section 3]{pham95}, {\O}ksendal and Sulem \cite[Section 10.3]{oksulem} or Bouchard and Touzi \cite[Section 5.2]{boutouzi}. The second assertion follows from the continuous dependence result in Theorem 6.2 in Jakobsen et al \cite{jklc2008}.
\end{proof}


We also need the following result. For the proof see Lemma 6.3 in Jakobsen et al \cite{jklc2008}.
\begin{lemma}\label{convex}
Convex combination of viscosity sub-solutions of (P) is also a viscosity sub-solution of (P).
\end{lemma}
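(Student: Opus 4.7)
The plan is to exploit the fact that $H(x, r, p, \mathrm{X}, V(\cdot))$ is a supremum of maps that are jointly affine in $(r, p, \mathrm{X}, V(\cdot))$, hence convex in these four arguments. By an obvious induction it suffices to treat a convex combination of two sub-solutions: let $V_1, V_2$ be viscosity sub-solutions of (\ref{PIDE}), fix $\lambda \in [0,1]$, and set $V := \lambda V_1 + (1-\lambda) V_2$. For \emph{classical} sub-solutions the claim is immediate since, pointwise,
\[
H(x, V, DV, D^2V, V(\cdot)) \le \lambda H(x, V_1, DV_1, D^2 V_1, V_1(\cdot)) + (1-\lambda) H(x, V_2, DV_2, D^2 V_2, V_2(\cdot)) \le 0.
\]
The task is therefore to lift this convexity argument to the viscosity setting, where we cannot use a single test function for $V$ to produce good test functions for each of $V_1, V_2$.

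First I would regularize by sup-convolution: for $\alpha>0$ define
\[
V_i^\alpha(x) := \sup_{y \in \R^N}\{ V_i(y) - \tfrac{|x-y|^2}{2\alpha}\}.
\]
Each $V_i^\alpha$ is semi-convex, locally Lipschitz, decreases to $V_i$ locally uniformly as $\alpha \downarrow 0$, and — by the standard sup-convolution argument adapted to the non-local framework (see e.g.\ Barles--Imbert \cite{bi2008}) — is a viscosity sub-solution of a perturbed equation obtained from (\ref{PIDE}) by shifting the spatial argument inside $H$ by an amount $e(\alpha) \to 0$. By Alexandrov's theorem $V_i^\alpha$ admits a pointwise second-order Taylor expansion a.e., and at every such point the sub-solution inequality holds in the classical pointwise sense.

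Next, setting $V^\alpha := \lambda V_1^\alpha + (1-\lambda) V_2^\alpha$, the intersection of the two full-measure sets of Alexandrov points is still full measure, and at every such common point convexity of $H$ in $(r, p, \mathrm{X}, V(\cdot))$ yields the pointwise inequality $H(x + e(\alpha), V^\alpha, DV^\alpha, D^2 V^\alpha, V^\alpha(\cdot)) \le 0$. A Jensen-type lemma (as in Crandall--Ishii--Lions \cite{usersguide}) then promotes this a.e.\ pointwise inequality to the viscosity sub-solution property for $V^\alpha$ on the perturbed equation, and stability of viscosity sub-solutions of integro-PDEs under locally uniform convergence allows me to pass to the limit $\alpha \downarrow 0$ and conclude that $V$ is a viscosity sub-solution of (\ref{PIDE}).

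The main obstacle I expect is handling the non-local operator $J^u$ consistently throughout this programme. Since $J^u V^\alpha$ depends on the values of $V^\alpha$ at the shifted points $x + \eta(x, u, z)$, both the preservation of the sub-solution property under sup-convolution and the final stability step must reconcile pointwise second-order information at $x$ with the global dependence of $H$ on the unknown function. I would split $J^u$ into a singular part near $z=0$, controlled by the uniformly bounded Hessian produced by semi-convexity of $V_i^\alpha$, and a tail part, controlled by the exponential decay in (\ref{A1}) and by the polynomial growth of $V_i^\alpha$ inherited from estimate (\ref{estX}); uniformity in $\alpha$ of these bounds is what makes the limit pass through.
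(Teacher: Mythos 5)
The paper itself offers no proof of this lemma---it defers entirely to Lemma 6.3 of Jakobsen, Karlsen and La Chioma \cite{jklc2008}---so there is no in-paper argument to compare against; what you have written is essentially the standard proof of that cited result (and of its local antecedent in Barles--Jakobsen \cite{bj2002}): convexity of $H$ as a supremum of functionals affine in $(r,p,\mathrm{X},V(\cdot))$, sup-convolution to gain semiconvexity, Alexandrov's theorem to obtain an a.e.\ pointwise inequality, Jensen's lemma to return to the viscosity formulation, and stability as the regularization parameter vanishes. The route is therefore correct and is in substance the same one the paper is implicitly relying on. Two points deserve more care than your sketch gives them. First, the claim that $V_i^\alpha$ is a subsolution of an equation obtained by ``shifting the spatial argument inside $H$'' glosses over the main non-local difficulty: if $y_x$ attains the sup-convolution at $x$, one gets $V_i^\alpha(x+\eta(x,u,z))-V_i^\alpha(x)\ge V_i\bigl(y_x+\eta(x,u,z)\bigr)-V_i(y_x)$, so the jump amplitude appearing on the right is $\eta(x,u,z)$ rather than $\eta(y_x,u,z)$; reconciling the two requires the Lipschitz dependence of $\eta$ on $x$ from (\ref{A3}) and yields a genuinely perturbed non-local operator, not merely a translated $x$-argument. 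This is precisely what Barles--Imbert \cite{bi2008} and \cite{jklc2008} are set up to handle, and you should invoke their statements exactly or carry out that computation. Second, the tail of $J^u$ is controlled by the boundedness of $V_i$, which follows from (\ref{A2}) (bounded $h$) together with $c>0$; estimate (\ref{estX}) is a moment bound on the controlled state process and says nothing about the growth of $V_i^\alpha$, so that attribution should be corrected. Note also that in the application made of this lemma in the proof of Theorem \ref{main} only finite convex combinations occur, so your reduction by induction to two subsolutions is sufficient.
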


\begin{proof}[Proof of Theorem \ref{main}]
The proof is largely based on the proof of Theorem 6.4 in \cite{jklc2008}. Using the change of variable $y = x + e,$ the function $V^\varepsilon(\cdot-e)$ satisfies
\[
H(y,V^\varepsilon(y-e),DV^\varepsilon(y-e),D^2V^\varepsilon(y-e),V(\cdot-e))\le 0
\]
in the viscosity sense, for every $\abs{e}\le\varepsilon.$ That is, $V^\varepsilon(\cdot-e)$ is viscosity sub-solution of (\ref{PIDE}) for every $\abs{e}\leq\varepsilon.$ Now, for $\kappa,\eps\in(0,1),$ let $V_{\eps,\kappa}:=V^\varepsilon\ast\phi_\kappa$ where
\[
\phi_\kappa (x):=\frac{1}{\varepsilon^N}\phi\left(\frac{x}{\kappa}\right)
\]
and $\phi$ is a smooth positive function with unit mass and support in $B(0,1).$ For $h<\kappa$ and $\alpha\in h\mathds{Z}^N$ define
\[
Q_h(\alpha):=\alpha+\left[-\frac{h}{2},\frac{h}{2}\right]^N, \ \ \ \ \phi_{\kappa,h}(\alpha):=\int_{Q_h(\alpha)}\phi_{\kappa}(y)\,dy
\]
and
\[
V_{\eps,\kappa,h}(x):=\sum_{\alpha\in h\mathds{Z}^N}V^\varepsilon(x-\alpha)\phi_{\kappa,h}(\alpha).
\]
By properties of mollifiers, $V_{\varepsilon,\kappa,h}\to V_{\varepsilon,\kappa}$ as $h\to 0.$ Moreover, $\phi_{\kappa,h}(\alpha)>0$ only for finitely many $\alpha\in h\mathds{Z}^N$ since $\phi_\kappa$ has compact support, and
\[
\sum_{\alpha\in h\mathds{Z}^N}\phi_{\kappa,h}(\alpha)=\int_{\R^N}\phi_{\kappa}(y)\,dy=1.
\]
By Lemma \ref{convex}, $V_{\varepsilon,\kappa,h}$ is a viscosity subsolution of (P), and so is its pointwise limit $V_{\varepsilon,\kappa}=\lim_{h\to 0}V_{\varepsilon,\kappa,h}$ by stability results for viscosity subsolutions, see e.g. Barles and Imbert \cite[Section 3]{bi2008}. By the definition of the dual problem, we have $V_{\varepsilon,\kappa}\le\rho^*(x)$ for all $x\in\R^N.$ The desired result follows by taking the limit as $\varepsilon,\kappa\to 0$ in conjunction with Lemma \ref{eps}.
\end{proof}







\end{document}